\newcommand{\compactlist}{\begin{list}{$\bullet$}{\setlength{\leftmargin}{1em}}}
\def\cals{\mathcal{S}}
\newcommand{\spinc}{\ifmmode{{\mathfrak s}}\else{${\mathfrak s}$\ }\fi}
\newcommand{\spinct}{\ifmmode{{\mathfrak t}}\else{${\mathfrak t}$\ }\fi}
\newcommand{\fig}[2] { \includegraphics[scale=#1]{#2} }
\newtheorem{theorem}{Theorem}[section]
\newtheorem{lemma}[theorem]{Lemma}
\newtheorem{corollary}[theorem]{Corollary}
\newtheorem{proposition}[theorem]{Proposition}
\theoremstyle{definition}
\theoremstyle{remark}
\numberwithin{equation}{section}
\begin{document}
\title{Concordance invariants of doubled knots and blowing up}
\author{Se-Goo Kim}
\author{Kwan Yong Lee}
\thanks{This work is supported by the Basic Science Research Program through the National Research Foundation of Korea (NRF) funded by the Ministry of Education (NRF-2015R1D1A1A01058384).}
\def\subjclassname{\textup{2010} Mathematics Subject Classification}
\expandafter\let\csname subjclassname@1991\endcsname=\subjclassname \expandafter\let\csname subjclassname@2000\endcsname=\subjclassname \expandafter\let\csname subjclassname@2010\endcsname=\subjclassname
\subjclass{Primary 57M25; Secondary 57N70}

\address{Department of Mathematics and Research Institute for Basic Sciences, Kyung Hee University, Seoul 02447, Korea}


\begin{abstract}
Let $\nu$ be either the Ozsv\'{a}th--Szab\'{o} $\tau$--invariant or the Rasmussen $s$--invariant, suitably normalized. For a knot $K$, Livingston and Naik defined the invariant $t_\nu(K)$ to be the minimum of $k$ for which $\nu$ of the $k$--twisted positive Whitehead double of $K$ vanishes. They proved that $t_\nu(K)$ is bounded above by $-TB(-K)$, where $TB$ is the maximal Thurston--Bennequin number. We use a blowing up process to find a crossing change formula and a new upper bound for $t_\nu$ in terms of the unknotting number. As an application, we present infinitely many knots $K$ such that the difference between Livingston--Naik's upper bound $-TB(-K)$ and $t_\nu(K)$ can be arbitrarily large.
\end{abstract}

\maketitle


\section{Introduction}\label{intro}
Let $\nu$ be an integer valued function on knots in the 3--sphere $S^3$ satisfying the following: For all knots $K$ and $J$ and for all torus knots $T_{p,q}$ with $p,q>0$,
\begin{enumerate}
\item $\nu(K\# J)=\nu(K)+\nu(J)$,
\item $|\nu(K)|\le g_4(K)$,
\item $\nu(T_{p,q})=(p-1)(q-1)/2$,
\end{enumerate}
where $g_4(K)$ is the smooth 4--ball genus of $K$. 
If two knots $K$ and $J$ are smoothly concordant, then $K\#-J$ is a slice knot, equivalently, $g_4(K\#-J)=0$. Property (2) implies that $|\nu(K\# -J)|\le 0$ and (1) then implies that $\nu(K)=\nu(J)$. We conclude that $\nu$ is a smooth concordance invariant. Now (1) implies that $\nu$ is a homomorphism from the smooth concordance group of knots, $\mathcal{C}$, into $\mathbb{Z}$.
This $\nu$ can be either of the $\tau$--invariant of Ozsv\'{a}th--Szab\'{o}~\cite{OS2003} and Rasmussen defined from Heegaard Floer homology and negative one half the $s$--invariant of Rasmussen~\cite{Ra2010}.

Let $D_+(K,k)$ be the $k$--twisted positive Whitehead double of a knot $K$, which is depicted in Figure~\ref{TD}. Livingston and Naik~\cite{LN2006} proved that, for every knot $K$, there is an integer $t_\nu(K)$ such that $\nu(D_+(K,k))=1$ for $k<t_\nu(K)$ and $\nu(D_+(K,k))=0$ for $k\ge t_\nu(K)$. This gives rise to a knot concordance invariant $t_\nu$ associated with each knot concordance invariant $\nu$ satisfying (1)--(3) since two concordant knots have concordant $k$--twisted positive Whitehead doubles for each~$k$. Notice that the definition for $t_K$ given by Livingston and Naik is equal to $t_\nu(K)-1$. We follow the definition of Hedden~\cite{H2007}. This $t_\nu$ invariant has been studied in \cite{H2007, LN2006, Pa2017}

\begin{figure}
 \setlength{\unitlength}{1pt}
 \begin{picture}(105,96)(4,0)
  \put(0,0){\fig{1}{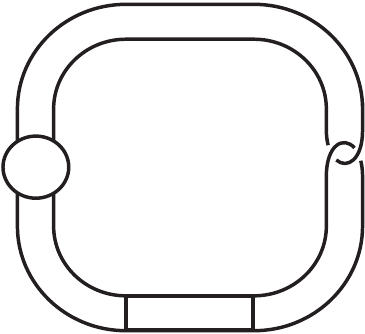}}
  \put(13.6,48){\makebox(0,0){$K$}}
  \put(57,5.5){\makebox(0,0){\small $-w+k$}}
 \end{picture}
 \caption{$D_+(K,k)$. Here $w$ is the writhe of $K$.}
 \label{TD}
\end{figure}

Livingston and Naik~\cite[Theorem 2]{LN2006} found bounds for $t_\nu$ as follows:
\begin{theorem}[Livingston--Naik]
\label{LN}
The invariant $t_\nu$ satisfies the inequality
\[TB(K) < t_\nu(K)\le -TB(-K)\] for every knot $K$, where $TB$ stands for the maximal Thurston--Bennequin number.
\end{theorem}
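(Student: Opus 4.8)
The plan is to establish the two inequalities separately, both by exhibiting suitable surfaces in $B^4$ (or cobordisms) and invoking the genus bound (2) together with the additivity (1) and the torus-knot computation (3).

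For the upper bound $t_\nu(K)\le -TB(-K)$, I would recall that a Legendrian representative of $-K$ with Thurston--Bennequin number $TB(-K)$ gives, via the standard front-projection construction, a way to see $D_+(K,k)$ as bounding a surface once $k$ is large enough. Concretely, from a Legendrian front of $-K$ one builds an unknotting-type cobordism: the $(-TB(-K))$--twisted double of $K$ can be obtained from the unknot by a sequence of saddle moves dictated by the Legendrian front, so $D_+(K,-TB(-K))$ is slice, hence $\nu(D_+(K,-TB(-K)))=0$, which by the monotonicity in Livingston--Naik's definition gives $t_\nu(K)\le -TB(-K)$. Equivalently, one uses that adding a positive twist to the double corresponds to a band move and that the ``$0$''-framed behavior of the Legendrian double forces the $\nu$ value to drop to $0$ at $k=-TB(-K)$.

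For the lower bound $TB(K)<t_\nu(K)$, by the same token a Legendrian front of $K$ with maximal Thurston--Bennequin number $TB(K)$ realizes $D_+(K,TB(K))$ as a knot for which one can compute $\nu(D_+(K,TB(K)))=1$: here the relevant fact is that the $TB(K)$--twisted positive double of a Legendrian knot is concordant to (or bounds a genus-one surface relative to) a positive torus knot, or more directly that it is a positive knot / admits a Legendrian representative whose double has a positive destabilization forcing $\nu=1$ via properties (2) and (3). Since $\nu(D_+(K,k))=1$ for all $k\le TB(K)$ and equals $0$ for $k\ge t_\nu(K)$, this yields $TB(K)<t_\nu(K)$.

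The main obstacle is the careful bookkeeping in the cobordism/front constructions: one must track precisely how the twisting parameter $k$ in $D_+(K,k)$ relates to the Thurston--Bennequin number and the writhe correction in Figure~\ref{TD}, and verify that the resulting surface in $B^4$ really has the genus (zero, respectively one) claimed. Getting the inequality to be strict on the left and non-strict on the right also requires knowing the exact value of $\nu$ on the boundary cases $k=TB(K)$ and $k=-TB(-K)$, which is where properties (1)--(3) and the structure of $D_+$ as a satellite (with companion $K$ and pattern the twisted Whitehead pattern) must be combined; I would expect to lean on the surgery/cobordism description of Whitehead doubles already implicit in Livingston--Naik's work rather than re-derive everything from scratch.
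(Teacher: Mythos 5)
First, note that the paper itself gives no proof of Theorem~\ref{LN}: it is quoted verbatim from \cite[Theorem~2]{LN2006}, so your proposal has to be measured against Livingston and Naik's original argument rather than anything in this text. Against that standard, there is a genuine gap in your upper bound. Your argument hinges on the claim that $D_+(K,-TB(-K))$ is slice, and this is false already for the unknot: $TB(U)=-1$, so $-TB(-U)=1$, and $D_+(U,1)$ is the figure-eight knot (Alexander polynomial $-t+3-t^{-1}$, which fails the Fox--Milnor condition). What is true, and what must be proved, is only the much weaker statement $\nu(D_+(K,k))=0$ for $k\ge -TB(-K)$, and no genus-zero cobordism from the unknot is available to produce it. Livingston and Naik obtain it by passing to the mirror $-D_+(K,k)=D_-(-K,-k)$, building a Legendrian representative of this negatively-clasped double from a maximal-$tb$ Legendrian representative of $-K$ via the Legendrian satellite construction, and applying the slice--Bennequin-type inequality $tb(\mathcal{L})+|r(\mathcal{L})|\le 2\nu(L)-1$; this forces $\nu(D_-(-K,-k))\ge 0$, while the clasp crossing change gives $\nu(D_-(-K,-k))\le 0$, so equality holds and $\nu(D_+(K,k))=0$.

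The same missing ingredient undermines your lower bound. You try to force $\nu(D_+(K,TB(K)))=1$ out of properties (1)--(3) together with informal appeals to positivity or concordance to a torus knot; but $D_+(K,TB(K))$ is in general neither a positive knot nor concordant to one, and (1)--(3) alone do not yield the conclusion. The actual input is again the Legendrian bound $tb+|r|\le 2\nu-1$ --- a separate theorem about $\tau$ (Plamenevskaya) and $s$ (Plamenevskaya, Shumakovitch), not a formal consequence of (1)--(3). One checks that the Legendrian double of a $tb$-maximal representative of $K$ is a Legendrian representative of $D_+(K,TB(K))$ with $tb=1$ and $r=0$, so $\nu\ge 1$, while property (2) and the genus-one Seifert surface give $\nu\le 1$; monotonicity in $k$ then yields the strict inequality $TB(K)<t_\nu(K)$. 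So your overall strategy (relate the twisting parameter to the Thurston--Bennequin number of Legendrian representatives) points in the right direction, but the two load-bearing steps --- sliceness of $D_+(K,-TB(-K))$ and deducing $\nu=1$ from (1)--(3) --- are respectively false and unsupported; both must be replaced by the slice--Bennequin inequality for $\nu$.
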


We say that the invariant $\nu$ is \emph{nonnegative in blowing up $+1$} or \emph{BU nonnegative} if $\nu(K)\ge 0$ for every knot $K$ that can be changed to a slice knot by a finite sequence of blowing up $+1$'s on zero linked unknots. A definition of blowing up $+1$ is given in Section~\ref{concept}.
Combining the work of Cochran--Gompf~\cite{CG1988} with the results of Ozsv\'{a}th--Szab\'{o}~\cite{OS2003} and Kronheimer--Mrowka~\cite{KM2013}, we can see that $\tau$ and $-s/2$ are BU nonnegative invariants. See Theorem~\ref{thm:nonnegative}.

We prove a crossing change formula for $t_\nu$:
\begin{theorem}
\label{mainthm}
Let $\nu$ be a BU nonnegative integer valued knot concordance invariant satisfying (1)--(3). If $K_-$ is the knot obtained from a knot $K_+$ by changing a crossing from positive to negative,  
\[
t_{\nu}(K_-) \leq t_{\nu}(K_+) \leq t_{\nu}(K_-)+4.
\]
\end{theorem}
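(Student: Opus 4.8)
The plan is to derive both inequalities from the fact that $\nu$ is monotone under the basic operation: if a knot $J'$ is obtained from $J$ by a single blowing up $+1$ on a zero linked unknot, then $\nu(J')\le\nu(J)$. Indeed, $J$ is recovered from $J'$ by the inverse operation, a blowing up $-1$ on the same unknot; passing to mirror images and reversing orientations converts this into a blowing up $+1$, so $-J'$ can be changed to $-J$ by a blowing up $+1$ on a zero linked unknot, and hence $J\#(-J')$ becomes a slice knot after one blowing up $+1$. By the BU nonnegativity hypothesis $\nu(J\#(-J'))\ge 0$, and since $\nu$ is a concordance homomorphism this says $\nu(J)\ge\nu(J')$. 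Iterating, $\nu$ is nonincreasing under any finite sequence of blowings up $+1$ on zero linked unknots.

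First I would treat the left inequality. A positive-to-negative crossing change taking $K_+$ to $K_-$ is effected by a blowing up $+1$ on a zero linked unknot $C$ about the two strands of the crossing. Regarded in the complement of $D_+(K_+,k)$, the same unknot $C$ encircles the four strands obtained by doubling those two strands, and it is still zero linked with $D_+(K_+,k)$ because the Whitehead pattern has winding number zero. Blowing up $+1$ on $C$ then replaces the companion $K_+$ by $K_-$ while leaving the Whitehead pattern untouched, and a linking number computation shows that the Seifert framing of $K_+$ is carried to the Seifert framing of $K_-$, so the twisting parameter is unchanged. Thus $D_+(K_-,k)$ is obtained from $D_+(K_+,k)$ by one blowing up $+1$, giving $\nu(D_+(K_-,k))\le\nu(D_+(K_+,k))$ for all $k$; since $\nu(D_+(J,m))=1$ for $m<t_\nu(J)$ and $=0$ for $m\ge t_\nu(J)$, this forces $t_\nu(K_-)\le t_\nu(K_+)$.

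For the right inequality I would prove the complementary claim that, for every $k$, $D_+(K_+,k)$ is obtained from $D_+(K_-,k-4)$ by a finite sequence of blowings up $+1$ on zero linked unknots. Granting this, monotonicity gives $\nu(D_+(K_+,k))\le\nu(D_+(K_-,k-4))$; taking $k=t_\nu(K_-)+4$ makes the right side $0$, so $\nu(D_+(K_+,t_\nu(K_-)+4))=0$ and therefore $t_\nu(K_+)\le t_\nu(K_-)+4$. The geometry behind the claim is that the negative-to-positive crossing change from $K_-$ to $K_+$, after doubling, becomes the operation of switching the four crossings of the $2$-over-$2$ strand block in $D_+(K_-,k-4)$: two of these switches are positive-to-negative and are themselves blowings up $+1$, while the remaining two are negative-to-positive, cannot be realized by blowings up $+1$ in isolation, and must be converted -- together with the change of writhe of $K$ -- into blowings up $+1$ by sliding the twist region of $D_+(K_-,k-4)$ alongside the crossing and absorbing the discrepancy as four extra negative twists in the doubled band, which is exactly the shift from $k-4$ to $k$.

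The step I expect to be the main obstacle is this last claim. In contrast to the left inequality, the reverse crossing change is not a single blowing up $+1$, and one must build an honest finite sequence of them -- equivalently, realize a $+1$ full twist on the four doubled strands as a composition of $-1$ full twists on zero linked subcollections after a suitable isotopy -- while tracking precisely how each move perturbs the twisting parameter, so that the total cost comes out to be $4$ and not something larger. Everything else is formal manipulation with the definition of $t_\nu$ and the two-valuedness of $k\mapsto\nu(D_+(K,k))$.
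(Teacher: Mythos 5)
Your argument for the left inequality is essentially the paper's: realize the positive-to-negative crossing change by a blowing up $+1$ on a zero linked unknot around the two strands, observe that after doubling the unknot still has linking number zero with $D_+(K_+,k)$ because the pattern has winding number zero, and check that the framing bookkeeping makes the output exactly $D_+(K_-,k)$. (The paper phrases that last check as: the full twist adds two positive twists to the doubled band while the writhe drops by $2$, so the parameter $-w+k$ becomes $-w'+k$ with $w'=w-2$; your Seifert-framing formulation reaches the same conclusion.) Your monotonicity statement for $\nu$ under blowings up $+1$ is a correct minor variant of the paper's Proposition~\ref{prop:BU nonnegative}. The formal deductions from the geometric facts to the two inequalities for $t_\nu$ are also fine.

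The genuine gap is the right inequality, and you flag it yourself. Your plan is to realize the passage $D_+(K_-,k-4)\to D_+(K_+,k)$ by decomposing a negative-to-positive crossing change of the companion into switches of the four crossings of the doubled crossing block, converting the recalcitrant ones into blowings up $+1$ by ``sliding the twist region'' and ``absorbing the discrepancy.'' As written this is not a proof: a $+1$ full twist on four strands is not a composition of individual crossing switches in that block, the two switches you admit are not blowings up $+1$ are exactly the content to be established, and no finite sequence of zero linked $+1$ blow-ups is actually exhibited. The paper avoids this entirely with a short trick (Lemma~\ref{lem:BU double} and Figure~\ref{crossing-blowing-up}(b)): the \emph{same} positive-to-negative crossing change on the companion can also be effected by a blowing up $-1$ on an unknot $U$ encircling the two strands in the parallel position. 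That $U$ has linking number $\pm 2$ with $K_+$, but it still has linking number zero with the double, again because the pattern has winding number zero. The resulting $-1$ full twist on the four doubled strands converts $K_+$ to $K_-$ and adds two negative twists to the band, so the parameter $-w+k$ becomes $-w+k-2=-(w-2)+(k-4)$, i.e.\ the output is $D_+(K_-,k-4)$. Since a blowing up $-1$ is the inverse of a blowing up $+1$, a \emph{single} blowing up $+1$ on a zero linked unknot converts $D_+(K_-,k-4)$ into $D_+(K_+,k)$, which is the claim your sketch leaves unproved. The shift by $4$ (rather than $2$) comes precisely from the simultaneous change of writhe and of the twist region; without the paper's inversion trick, pinning this constant down along your route would require constructing and auditing an explicit sequence of moves that you have not supplied.
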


As applications, we have
\begin{corollary}
\label{maincor}
Let $\nu$ be a BU nonnegative integer valued knot concordance invariant satisfying  (1)--(3). \\
(a) If a diagram of a knot $K$ has $p$ positive crossings and $n$ negative crossings whose crossing changes convert $K$ into a slice knot, then
\[-4n\le t_\nu(K)\le 4p.\] 
(b) There are infinitely many linearly independent concordance classes of knots $K$ such that $t_\nu(K)\le 4$ and $-TB(-K)$ can be arbitrarily large.
\end{corollary}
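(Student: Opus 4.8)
The plan is to deduce part~(a) from Theorem~\ref{mainthm} by telescoping, and part~(b) from part~(a) together with an explicit family of doubled knots. Throughout, write $U$ for the unknot; note $t_\nu(U)=0$, since the $0$-twisted positive Whitehead double of $U$ is again $U$ (hence $\nu(D_+(U,0))=0$ and $t_\nu(U)\le 0$), while Theorem~\ref{LN} forces $t_\nu(U)>TB(U)=-1$. For part~(a), fix a sequence of crossing changes $K=K_0,K_1,\dots,K_m=S$ realizing the hypothesis, where $S$ is slice, $p$ of the changes turn a positive crossing into a negative one, and $n$ of them do the reverse. If the $i$-th change is positive-to-negative, Theorem~\ref{mainthm} applied with $K_+=K_i$, $K_-=K_{i+1}$ gives $0\le t_\nu(K_i)-t_\nu(K_{i+1})\le 4$; if it is negative-to-positive, applying it with the roles reversed gives $-4\le t_\nu(K_i)-t_\nu(K_{i+1})\le 0$. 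Since $t_\nu$ is a concordance invariant, $\sum_i\bigl(t_\nu(K_i)-t_\nu(K_{i+1})\bigr)=t_\nu(K)-t_\nu(S)=t_\nu(K)$, and this sum lies in $[-4n,4p]$, so $-4n\le t_\nu(K)\le 4p$.

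For part~(b), note first that any knot $K$ with $u(K)=1$ satisfies $t_\nu(K)\le 4$: in a diagram realizing $u(K)=1$, change the relevant crossing to reach $U$ and apply part~(a) with $(p,n)=(1,0)$ if that crossing is positive and with $(p,n)=(0,1)$ if it is negative. It therefore suffices to exhibit infinitely many linearly independent concordance classes of unknotting-number-one knots $K_1,K_2,\dots$ with $-TB(-K_j)\to\infty$. I would seek these among the $2$-twisted positive Whitehead doubles $K_j=D_+(J_j,2)$ for a carefully chosen sequence of companions $J_j$ (for instance torus knots $J_j=T_{2,2j+1}$). Each $K_j$ has unknotting number one — a single crossing change in the clasp undoes the double and leaves $U$, as for any doubled knot — so $t_\nu(K_j)\le 4$; and since the genus-one Seifert form of $D_+(\cdot,2)$ is metabolic, each $K_j$ is algebraically slice.

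Two facts then have to be arranged by the choice of companions. First, the classes $[K_j]$ must be linearly independent in $\mathcal{C}$; since the $K_j$ are algebraically slice, this must be extracted from Casson--Gordon invariants, which by Litherland's satellite formula are governed by the Tristram--Levine signatures of the $J_j$, so that for companions with suitably independent signatures the standard Casson--Gordon arguments for twisted doubles yield linear independence. Second, one needs $-TB(-K_j)\to\infty$, i.e. $TB(-K_j)\to-\infty$ for the mirrors $-K_j$; here I would bound $TB(-K_j)$ from above by the Kauffman polynomial estimate (or the HOMFLY polynomial estimate) for the maximal Thurston--Bennequin number and show that, because the companions $-J_j$ become increasingly negative, the relevant polynomial degree of $-K_j$ grows without bound. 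Granting both, Theorem~\ref{LN} gives $-TB(-K_j)-t_\nu(K_j)\ge -TB(-K_j)-4\to\infty$, which is assertion~(b).

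The telescoping in part~(a) and the unknotting-number-one remark are routine. I expect the real obstacle to lie in part~(b): one must commit to a single sequence of companions for which the Casson--Gordon computation really does produce linear independence — which means invoking, and perhaps adapting, the known signature computations for twisted Whitehead doubles — and, more delicately, for which $TB(-K_j)\to-\infty$ can actually be proved, since the polynomial invariants of a winding-number-zero satellite (whose Alexander polynomial, for one, ignores the companion entirely) need not reflect the companion's complexity, so the Thurston--Bennequin estimate has to be obtained with care.
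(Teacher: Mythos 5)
Your part~(a) is correct and is exactly the paper's argument: telescope Theorem~\ref{mainthm} along the sequence of crossing changes, using $t_\nu(\text{slice})=0$. The unknotting-number-one observation ($u(K)=1$ implies $t_\nu(K)\le 4$) is also sound and is how the paper uses part~(a) in part~(b).

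Part~(b), however, is not a proof but a program, and both of its load-bearing steps are left open. First, for your proposed family $K_j=D_+(T_{2,2j+1},2)$ the knots are algebraically slice, so linear independence in $\mathcal{C}$ would have to come from Casson--Gordon (or $d$-invariant/Heegaard Floer) computations that you only gesture at; you do not verify that the relevant Tristram--Levine signature conditions hold for these companions, and ``standard arguments yield independence'' is precisely the part that needs to be done. Second, and more seriously, you need $-TB(-K_j)\to\infty$, and you yourself identify the obstruction: $K_j$ is a winding-number-zero satellite, so the Kauffman/HOMFLY degree bounds on $TB$ need not detect the companion at all, and no mechanism is offered for why $TB(-K_j)\to-\infty$. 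Without that, Theorem~\ref{LN} gives no growing gap. The paper sidesteps both difficulties by taking doubles of the \emph{unknot}: $K_{2n+1}=D_+(U,-n)$ is an alternating twist knot with unknotting number one (so $t_\nu\le 4$ by part~(a)); because it is alternating, Ng's formula (Theorem~\ref{Ng}, in the form of Corollary~\ref{eqtnTB}) computes $TB$ exactly from the writhe and the checkerboard coloring, giving $-TB(-K_{2n+1})=2n+4$; and linear independence is classical, since Levine showed these twist knots are independent already in the \emph{algebraic} concordance group, so no Casson--Gordon input is needed. If you want to salvage your approach you must either commit to companions for which the Casson--Gordon computation is actually carried out and for which a concrete Legendrian front or polynomial estimate certifies $TB(-K_j)\to-\infty$, or — far more economically — choose a family where both quantities are computable in closed form, as the paper does.
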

This shows that, if $\nu$ is either $\tau$ or $-s/2$, our upper bound for $t_\nu$ is much better than that of Livingston and Naik for infinitely many knots.

\section{Blowing up}\label{concept}

We say that a knot $J$ is constructed from a knot $K$ by a \emph{blowing up $+1$} if a diagram of $J$ is obtained from a diagram of $K$ by giving a right-handed full twist on a bunch of strings as shown in Figure~\ref{+1blowingup_4strans}(a). On the other hand, if the full twist is left-handed, this process is called a \emph{blowing up $-1$}. 

A blowing up $+1$ converting $K$ into $J$ yields a manifold pair $(W,A)$, where $W$ is a twice punctured $\mathbb{CP}^2$ and $A$ is an annulus, with boundary $\partial(W,A)=(-S^3,-J)\sqcup (S^3,K)$, as described in Figure~\ref{+1blowingup_4strans}(b): Begin with $(S^3\times I,J\times I)$, where $I=[0,1]$, choose an unknot $U$ with framing $+1$ around a bunch of strings of $J$ in $S^3\times 1$, attach a $2$--handle along~$U$, and slide the strings of $J$ over $U$ to get $K$ that is split from~$U$. Here, if the linking number of $J$ with $U$ is zero as in Figure~\ref{+1blowingup_4strans}, we call it a blowing up $+1$ \emph{on a zero linked unknot}~$U$. See~\cite{GS1994} for more details.

\begin{figure}
 $
 \begin{CD}
  \raisebox{-3ex}{\fig{1}{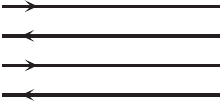}}
  @>>>
  \raisebox{-3.3ex}{
  \setlength{\unitlength}{1pt}
  \begin{picture}(64,34)(4,0)
    \put(0,0){\fig{1}{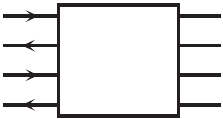}}
    \put(37,17){\makebox(0,0){$+1$}}
  \end{picture}
  }
	@=
	\raisebox{-3ex}{\fig{1}{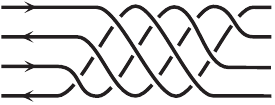}}
 \end{CD}
 $ \bigskip
 
 (a) Blowing up $+1$
 \bigskip
 
 $
 \begin{CD}
  \raisebox{-2.97ex}{
   \setlength{\unitlength}{0.9pt}
   \begin{picture}(64,34)(4,0)
    \put(0,0){\fig{0.9}{fig_4-strand_twist_box.pdf}}
    \put(37,17){\makebox(0,0){$+1$}}
   \end{picture}
  }
  @>\text{add a}>\text{$2$--handle}>
  \raisebox{-3.87ex}{
   \setlength{\unitlength}{0.9pt} 
   \begin{picture}(96,42)(4,0)
    \put(0,0){\fig{0.9}{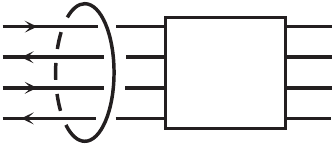}}
    \put(68,21){\makebox(0,0){$+1$}}
    \put(18,3){\makebox(0,0){\scriptsize$U$}}
    \put(39,3){\makebox(0,0){\scriptsize$+1$}}
   \end{picture}
  }
  @>\text{slide strings}>\text{over the unknot}>
  \raisebox{-2.7ex}{\fig{0.9}{fig_4-strand.pdf}}
 \end{CD}
 $ \medskip
 
 (b) Twice punctured $\mathbb{CP}^2$
 \caption{}
 \label{+1blowingup_4strans}
\end{figure}

Cochran and Gompf~\cite[Observation 2.3]{CG1988} showed that a blowing up $+1$ operation on a zero linked unknot yields an annulus $A$ smoothly properly embedded in the twice punctured $\mathbb{CP}^2$, denoted $W$, in such a way that the relative homology $[A,\partial A]$ is trivial in $H_2(W,\partial W)$.  Furthermore, if $J$ is a slice knot, then $J$ bounds a $2$--disk smoothly properly embedded in a $4$--ball, which can be glued to a boundary of $(W,A)$ to produce a once punctured $\mathbb{CP}^2$ with a smoothly properly embedded disk. We summarize:

\begin{theorem}[Cochran--Gompf] \label{CG}
If a knot $K$ can be changed to a slice knot by a finite sequence of blowing up $+1$'s on zero linked unknots, then there is a once punctured $\#_n\mathbb{CP}^2$, say $W$, and a $2$--disk $D$ smoothly properly embedded in $W$ such that $\partial D=K$ and $[D,\partial D]$ is trivial in $H_2(W,\partial W)$. Here, $\#_n\mathbb{CP}^2$ denotes the connected sum of $n$ copies of $\mathbb{CP}^2$.
\end{theorem}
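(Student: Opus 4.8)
The plan is to assemble $W$ and $D$ by stacking the elementary cobordism pairs furnished by Observation~2.3 and then capping off with a slice disk. Write the assumed sequence of blowing up $+1$'s as $K=K_0,K_1,\dots,K_n=J$, where each $K_i$ is obtained from $K_{i-1}$ by a blowing up $+1$ on a zero linked unknot and $J$ is slice. By Observation~2.3, each step produces a pair $(W_i,A_i)$ in which $W_i$ is a twice punctured $\mathbb{CP}^2$ and $A_i$ is a smoothly properly embedded annulus, with $\partial(W_i,A_i)=(-S^3,-K_i)\sqcup(S^3,K_{i-1})$ and with $[A_i,\partial A_i]=0$ in $H_2(W_i,\partial W_i)$.

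First I would stack the $(W_i,A_i)$ end to end, identifying the $(-S^3,-K_i)$ boundary component of $(W_i,A_i)$ with the $(S^3,K_i)$ boundary component of $(W_{i+1},A_{i+1})$ by a diffeomorphism under which the induced orientations agree and the knots (hence the annuli) match along $K_i$. Since gluing $M\setminus B^4$ to $N\setminus B^4$ along their common $S^3$ boundary produces $M\# N$, a linear chain of $n$ twice punctured $\mathbb{CP}^2$'s is a twice punctured $\#_n\mathbb{CP}^2$; call it $W'$, with the $A_i$ glued into a smoothly properly embedded annulus $A'$ and $\partial(W',A')=(-S^3,-J)\sqcup(S^3,K)$. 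Since $J$ is slice, so is $-J$, hence $-J$ bounds a smoothly properly embedded disk $\Delta$ in a $4$--ball $B^4$; gluing $(B^4,\Delta)$ onto the $(-S^3,-J)$ end of $(W',A')$ refills one puncture and caps $A'$ off to a disk. The outcome is a once punctured $\#_n\mathbb{CP}^2$, which I take for $W$, together with a smoothly properly embedded disk $D=A'\cup\Delta$ satisfying $\partial D=K$.

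It then remains to verify that $[D,\partial D]=0$ in $H_2(W,\partial W)$, and here I would use intersection numbers. The group $H_2(W)\cong\mathbb{Z}^n$ is generated by the classes $h_1,\dots,h_n$ of the $\mathbb{CP}^1$'s in the $n$ summands, with diagonal (in particular unimodular) intersection form; consequently the intersection pairing $H_2(W)\otimes H_2(W,\partial W)\to\mathbb{Z}$ is perfect, and $[D,\partial D]=0$ is equivalent to $D\cdot\mathbb{CP}^1_i=0$ for every $i$. The capping disk $\Delta$ sits inside the glued-in $B^4$ and so is disjoint from every $\mathbb{CP}^1_i$, while for $j\ne i$ the annulus $A_j$ can be pushed out of the $i$-th summand; hence $D\cdot\mathbb{CP}^1_i=A_i\cdot\mathbb{CP}^1_i$, which vanishes because $[A_i,\partial A_i]$ is trivial in $H_2(W_i,\partial W_i)\cong\mathbb{Z}$. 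Therefore $[D,\partial D]=0$, as needed.

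Nearly all the substance lives in Observation~2.3, so the remaining work is bookkeeping: identifying the stacked $4$--manifold as a punctured $\#_n\mathbb{CP}^2$ while keeping orientations straight so the pieces fit into an oriented cobordism and the annuli glue correctly along the intermediate knots, and then tracking the homology class of the annulus through the gluings and the final capping. I expect the orientation accounting---in particular arranging that ``$J$ slice'' yields a slice disk that fills the $(-S^3,-J)$ end in the correct way---to be the fussiest point, though it is routine rather than deep.
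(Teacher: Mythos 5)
Your proposal is correct and follows essentially the same route as the paper, which proves the theorem by the same sketch in the paragraph preceding the statement: stack the Cochran--Gompf cobordism pairs $(W_i,A_i)$ from Observation~2.3 and cap off with a slice disk in a $4$--ball. Your additional verification of $[D,\partial D]=0$ via the perfect intersection pairing is a correct filling-in of a detail the paper leaves implicit.
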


The conclusion of Theorem~\ref{CG} has played an important role in studying knot concordance invariants. It together with the works of Ozsv\'{a}th--Szab\'{o}~\cite[Theorem 1.1]{OS2003} and Kronheimer--Mrowka~\cite[Corollary 1.1]{KM2013} implies the following: 

\begin{theorem}
\label{thm:nonnegative}
Both $\tau$ and $-s/2$ are BU nonnegative.
\end{theorem}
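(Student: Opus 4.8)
The plan is to obtain the statement by feeding the topological conclusion of Theorem~\ref{CG} into the four-dimensional genus inequalities for $\tau$ and $s$. Write $\nu$ for $\tau$ or for $-s/2$, normalized as in the introduction, and suppose $K$ is a knot that can be changed to a slice knot by a finite sequence of blowing up $+1$'s on zero linked unknots. By Theorem~\ref{CG} there are an integer $n\ge 0$, a once punctured $\#_n\mathbb{CP}^2$ — call it $W$, with $\partial W=S^3$ — and a smoothly properly embedded $2$--disk $D\subset W$ with $\partial D=K$ and $[D,\partial D]=0$ in $H_2(W,\partial W)$. Since $\mathbb{CP}^2$ is positive definite and simply connected, $W$ is positive definite with $b_1(W)=0$; hence its orientation reversal $-W$ is a once punctured $\#_n\overline{\mathbb{CP}^2}$, which is \emph{negative} definite, has $b_2^+=0$ and $b_1=0$, and again has boundary $S^3$.

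Now view $D$ as a disk in $-W$. Its boundary is the mirror image $m(K)$ of $K$, it is connected of genus $0$, and its relative homology class still vanishes. The four-dimensional $\tau$--inequality of Ozsv\'ath--Szab\'o \cite[Theorem 1.1]{OS2003}, and its Rasmussen--invariant analogue of Kronheimer--Mrowka \cite[Corollary 1.1]{KM2013}, each bound $\nu$ of the boundary of a smoothly embedded surface in a four-manifold with $b_2^+=0$ by the genus of the surface together with correction terms built from the relative homology class of the surface and the intersection form. Applied to $D\subset -W$, where that class is zero, all the correction terms drop out and one is left with $\nu(m(K))\le g(D)=0$. Since $\nu(m(K))=-\nu(K)$ for both $\nu=\tau$ and $\nu=-s/2$, this gives $\nu(K)\ge 0$, which is exactly the assertion that $\tau$ and $-s/2$ are BU nonnegative.

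All of the analytic input is packaged in the two cited inequalities, so the only thing that really needs care — and the place where the sign of the final inequality is actually decided — is the orientation and normalization bookkeeping. In carrying this out I would make three checks explicit. First, using the discussion preceding Theorem~\ref{CG}, that blowing up $+1$ contributes copies of $\mathbb{CP}^2$ rather than $\overline{\mathbb{CP}^2}$, so that the manifold satisfying the $b_2^+=0$ hypothesis of \cite{OS2003} and \cite{KM2013} is $-W$, which is precisely what replaces $K$ by its mirror. Second, that the normalization of $s$ is the one under which $-s/2$ both satisfies (1)--(3) and, like $\tau$, changes sign under mirroring, so that \cite[Corollary 1.1]{KM2013} reads $-s(m(K))/2\le 0$ as used above. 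Third, that $(-W,D)$ meets every hypothesis of \cite[Theorem 1.1]{OS2003} and \cite[Corollary 1.1]{KM2013}: $b_2^+=0$, $b_1=0$, $D$ connected and oriented, and $[D,\partial D]$ a torsion class (here even $0$). Each verification is routine; it is their combination that makes the argument go.
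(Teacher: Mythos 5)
Your argument is correct and is exactly the route the paper takes: the paper states Theorem~\ref{thm:nonnegative} as an immediate consequence of Theorem~\ref{CG} combined with \cite[Theorem 1.1]{OS2003} and \cite[Corollary 1.1]{KM2013}, leaving unwritten the orientation-reversal and mirroring bookkeeping that you make explicit. Your care with the signs (passing to $-W$, hence to $m(K)$, and fixing the normalization of $s$) correctly fills in what the paper leaves implicit.
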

We remark that the conclusion of Theorem~\ref{CG} also implies that $K$ is $0$--positive in the sense of Cochran--Harvey--Horn~\cite{CHH2013}. They showed that more knot concordance invariants are BU nonnegative, though they do not satisfy condition (3).

Before closing this section, we prove a property of BU nonnegative knot invariants:
\begin{proposition}
\label{prop:BU nonnegative}
Let $\nu$ be a BU nonnegative integer valued knot concordance invariant satisfying (1) and (2). If a knot $K$ can be changed to a knot $J$ by a finite sequence of blowing up $+1$'s on zero linked unknots, then $\nu(K)\ge \nu(J)$.
\end{proposition}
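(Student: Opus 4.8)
The plan is to reduce the statement to the defining property of BU nonnegativity applied to the single knot $K\#-J$. First I would record the two elementary consequences of hypotheses (1) and (2) already used in Section~\ref{intro}: since $J\#-J$ is slice we have $g_4(J\#-J)=0$, so (2) forces $\nu(J\#-J)=0$, and then (1) gives $\nu(-J)=-\nu(J)$; in particular $\nu$ is a concordance invariant and $\nu(K\#-J)=\nu(K)-\nu(J)$. Thus it suffices to prove $\nu(K\#-J)\ge 0$, and for this it is enough, by BU nonnegativity, to exhibit a finite sequence of blowing up $+1$'s on zero linked unknots that carries $K\#-J$ to a slice knot.

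The heart of the argument is the observation that a blowing up $+1$ is a purely local modification of a knot diagram, namely a right-handed full twist on a bunch of strings as in Figure~\ref{+1blowingup_4strans}(a). I would fix a diagram of $K\#-J$ obtained by juxtaposing a diagram of $K$ and a diagram of $-J$ in disjoint balls and joining them by a trivial band, so that the two summands stay in separate balls. By hypothesis there is a sequence of blowing up $+1$'s on zero linked unknots taking $K$ to $J$; each such move involves only strands of the $K$ diagram, so it can be performed verbatim inside the $K$ ball of the diagram of $K\#-J$ without disturbing the $-J$ part. The auxiliary unknot $U$ of each move has linking number zero with $K$, hence also with $K\#-J$, since the extra strands coming from the $-J$ summand lie in a disjoint ball and contribute nothing to the linking number; so each move remains a blowing up $+1$ on a zero linked unknot. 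Carrying out the whole sequence turns $K\#-J$ into $J\#-J$, which is slice.

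Therefore $K\#-J$ is changed to a slice knot by a finite sequence of blowing up $+1$'s on zero linked unknots, so BU nonnegativity gives $\nu(K\#-J)\ge 0$, and combining with $\nu(K\#-J)=\nu(K)-\nu(J)$ yields $\nu(K)\ge\nu(J)$. The only point genuinely requiring care, and the one I would take most trouble to justify (appealing to the picture), is that the blowing up moves can be localized to the $K$ summand while preserving the zero linking condition; everything else is formal. Equivalently, one can run the argument move by move: if $K$ goes to $K'$ by one blowing up $+1$ on a zero linked unknot, then $K\#L$ goes to $K'\#L$ by the same move for every knot $L$, and one then specializes to $L=-J$ and iterates.
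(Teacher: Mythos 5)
Your proposal is correct and follows essentially the same argument as the paper: place $-J$ in a ball disjoint from the unknots $U_1,\ldots,U_n$, apply the same sequence of blowing up $+1$'s to convert $K\#-J$ into the slice knot $J\#-J$, and conclude $\nu(K)-\nu(J)=\nu(K\#-J)\ge 0$ by BU nonnegativity. The extra care you take over the zero-linking condition and the identity $\nu(K\#-J)=\nu(K)-\nu(J)$ is just a more explicit rendering of what the paper leaves implicit.
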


\begin{proof}
Suppose that $K$ can be changed to a knot $J$ by a finite sequence of blowing up $+1$'s on zero linked unknots $U_1,\ldots, U_n$. We may assume that $-J$ of $K\#-J$ is contained in a small 3--ball that does not intersect any of $U_1,\ldots, U_n$. Then the same sequence of unknots $U_1,\ldots, U_n$ can be used for blowing up $+1$'s to convert $K\# -J$ into the slice knot $J\#-J$. Since $\nu$ is BU nonnegative, $\nu(K\# -J)\ge 0$ or $\nu(K)-\nu(J)\ge 0$. So, $\nu(K)\ge \nu(J)$.
\end{proof}

\section{Crossing change formula}\label{main}

We first observe a lemma:
\begin{lemma}
\label{lem:BU double}
If $K_-$ is the knot obtained from a knot $K_+$ by changing a crossing from positive to negative,
then, for each integer $k$, a blowing up $+1$ on a zero linked unknot converts $D_+(K_+,k)$ into $D_+(K_-,k)$, and another blowing up $+1$ on a zero linked unknot  converts $D_+(K_-,k-4)$ into $D_+(K_+,k)$.
\end{lemma}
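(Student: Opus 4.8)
The plan is to exhibit the two blowing-up operations directly on a standard diagram of the Whitehead double. Recall that $D_+(K,k)$ is built from a diagram of $K$ by taking two parallel oppositely-oriented copies of $K$, joining them by a positive clasp, and inserting $-w+k$ full twists in the band, where $w$ is the writhe of the chosen diagram. The key point is that the crossing change turning $K_+$ into $K_-$ takes place between two strands of $K$, so in the doubled picture it occurs among \emph{four} parallel strands (two copies of each of the two strands of $K$ meeting at the crossing), all of whose orientations come in cancelling pairs.

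First I would localize the picture near the crossing being changed. In a neighborhood of that crossing, $D_+(K_+,k)$ looks like four parallel strands (the satellite of the two strands of $K_+$) crossing according to a positive crossing of $K$, i.e.\ one pair of parallel strands passing over the other pair. Now take a zero-linked unknot $U$ encircling all four strands and blow up $+1$: by the discussion in Section~2 (Figure~2(a)), this inserts a right-handed full twist on the four strands. The effect of a full twist on the four-strand cable of a single crossing is to add to that crossing a $\pm1$ full twist on the underlying \emph{two} strands of $K$ (a right-handed full twist on the cable descends to a right-handed full twist on the companion strands) plus some self-twisting inside each of the two parallel pairs. A right-handed full twist on the two strands of $K$ changes the positive crossing of $K$ into a negative one — equivalently, converts $K_+$ into $K_-$ — while the induced framing correction is exactly absorbed by the twist-box in the band of the Whitehead double: the writhe of the new diagram of $K$ differs from $w$ by the amount produced by the crossing change and the extra self-twists, and the $-w+k$ normalization of the twist region compensates so that the number of twists in the clasp band is unchanged. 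This is what gives $D_+(K_-,k)$. For the second statement, I would instead blow up $+1$ on a zero-linked unknot encircling the four strands near the (now negative) crossing of $K_-$, but arranged so that the induced twist on the two companion strands is the \emph{inverse} of a crossing change, turning the negative crossing back into a positive one; the bookkeeping of writhe changes then shows the twist parameter shifts by $4$, so that $D_+(K_-, k-4)$ is carried to $D_+(K_+, k)$. (Alternatively, one observes that composing the two operations must change the twist parameter by a fixed integer, and a single framed-link computation pins that integer down as $4$.)

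The main obstacle will be the precise framing/writhe accounting: one must check that a right-handed full twist on the four-strand cable, after sliding strands so as to realize the crossing change on $K$, produces exactly the diagram $D_+(K_\mp,k)$ and not $D_+(K_\mp,k')$ for some other $k'$, and in particular pin down the shift $4 = 2^2$ appearing in the second statement. The clean way to handle this is to track the writhe of the modified diagram of the companion knot: a full twist on a parallel pair of strands changes the writhe of that $2$-cable by the square of the number of strands worth of framing, i.e.\ by $4$ per pair relative to the base strand, and the $-w$ term in the twist-box definition of $D_+$ is precisely designed to cancel writhe changes of the companion, leaving a residual shift of $4$ in the twisting parameter in exactly one of the two cases. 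Everything else — that $U$ is unknotted, that it has linking number zero with $D_+(K_\pm,k)$ since the four strands it bounds carry cancelling orientations, and that the strand slides over the $+1$-framed handle are legitimate — is immediate from the local picture.
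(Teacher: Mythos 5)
Your proposal follows essentially the same route as the paper's proof: realize the crossing change by a full twist on an unknot encircling the strands at the crossing, observe that the four doubled strands pass through it with cancelling orientations so that it is zero linked with the double, and balance the two extra twists introduced in the clasp band against the change of writhe of the companion (from $w$ to $w-2$) absorbed by the $-w+k$ normalization, which yields no shift for one positioning of the unknot and a shift of $4$ for the other. The only cosmetic difference is that the paper obtains the second statement by performing a blowing up $-1$ on $D_+(K_+,k)$ to reach $D_+(K_-,k-4)$ and then reversing that move, whereas you blow up $+1$ directly on $D_+(K_-,k-4)$; these are the same operation read in opposite directions.
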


\begin{proof}
If $K_+$ has writhe number $w$, then $K_-$ has writhe number $w-2$. Let $c$ be the positive crossing whose change converts $K_+$ into $K_-$. The crossing change of $c$ can be described as either blowing up $+1$ or $-1$ on an unknot as in Figure~\ref{crossing-blowing-up}. 

\begin{figure}
 \[
  \begin{CD}
   \raisebox{-2.9ex}{\fig{0.4}{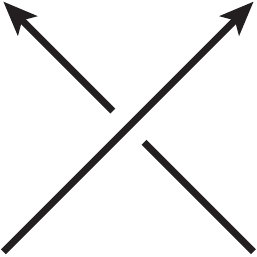}} @>>>
   \raisebox{-2.9ex}{
    \setlength{\unitlength}{0.4pt}
    \begin{picture}(102,73)(4,0)
     \put(-4,0){\fig{0.4}{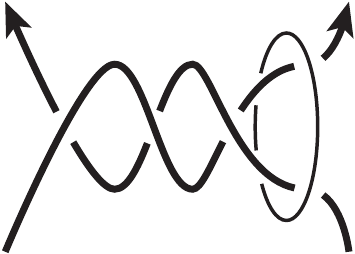}}
     \put(99,36.5){\makebox(0,0)[l]{\scriptsize $U$}}
    \end{picture}
   }  @=
   \raisebox{-2.9ex}{\fig{0.4}{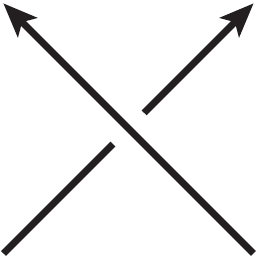}}
  \end{CD}
 \]
 (a) Blowing up $+1$ on $U$
 
 \smallskip
 \[
  \begin{CD}
   \raisebox{-2.9ex}{\fig{0.4}{fig_+crossing.pdf}} @>>>
   \raisebox{-4.05ex}{
    \setlength{\unitlength}{0.4pt}
    \begin{picture}(73,102)(4,0)
     \put(-4,0){\fig{0.4}{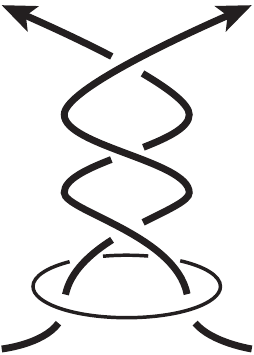}}
     \put(72,19){\makebox(0,0)[l]{\scriptsize $U$}}
    \end{picture}
   }  @=
   \raisebox{-2.9ex}{\fig{0.4}{fig_-crossing.pdf}}
  \end{CD}
 \]
 (b) Blowing up $-1$ on $U$
 \caption{Crossing change by blowing up}
 \label{crossing-blowing-up}
\end{figure}

The $k$--twisted positive Whitehead double of $K_+$, $D_+(K_+,k)$, is constructed by removing a tubular neighborhood $N$ of $K_+$ and attaching a solid torus containing the Whitehead knot back in with $k$ twists. Let $U$ be the unknot near $c$ as in Figure~\ref{crossing-blowing-up}(a), on which a blowing up process of changing $c$ takes place. If $N$ is taken sufficiently thin so that it does not meet $U$, the unknot $U$ has linking number zero with $D_+(K_+,k)$ and we can apply a blowing up $+1$ operation on $U$ of Figure~\ref{+1blowingup_4strans}(a). This gives a full twist of 4 strands, causing the effect of converting $K_+$ into $K_-$ and adding two additional positive twists on the Whitehead double, as depicted in Figure~\ref{TD_blowup}. Since $K_-$ has writhe number $w-2$, we see that the resulting knot is $D_+(K_-,k)$. This shows that $D_+(K_+,k)$ can be changed to $D_+(K_-,k)$ by a blowing up $+1$ on a zero linked unknot.

\begin{figure}
 \setlength{\unitlength}{0.85pt}
 \begin{picture}(435,122)(3,0)
  \put(0,0){\fig{0.85}{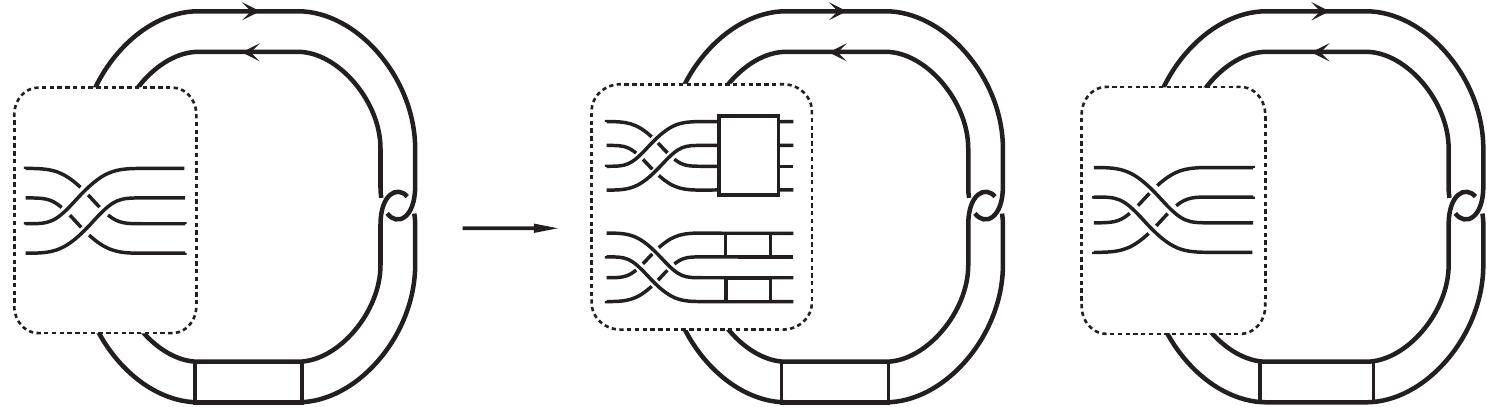}}
  \put(221,44){\makebox(0,0)[b]{\tiny $\mbox{}_{+1}$}}
  \put(221,31){\makebox(0,0)[b]{\tiny $\mbox{}_{+1}$}}
  \put(220.5,68){\makebox(0,0)[b]{\small ${+1}$}}
  \put(207,56){\makebox(0,0){\rotatebox{90}{$=$}}}
  \put(306,56){\makebox(0,0){{$=$}}}
  \put(75.3,6.7){\makebox(0,0){\scriptsize $-w+k$}}
  \put(244.8,6.7){\makebox(0,0){\scriptsize $-w+k$}}
  \put(384.3,6.7){\makebox(0,0){\tiny $-w'+k$}}
 \end{picture}
 \caption{Blowing up $+1$ of $D_+(K_+,k)$. Here $w'=w-2$.}
 \label{TD_blowup}
\end{figure}

\begin{figure}
 \setlength{\unitlength}{0.85pt}
 \begin{picture}(428,117)(5,0)
  \put(0,0){\fig{0.85}{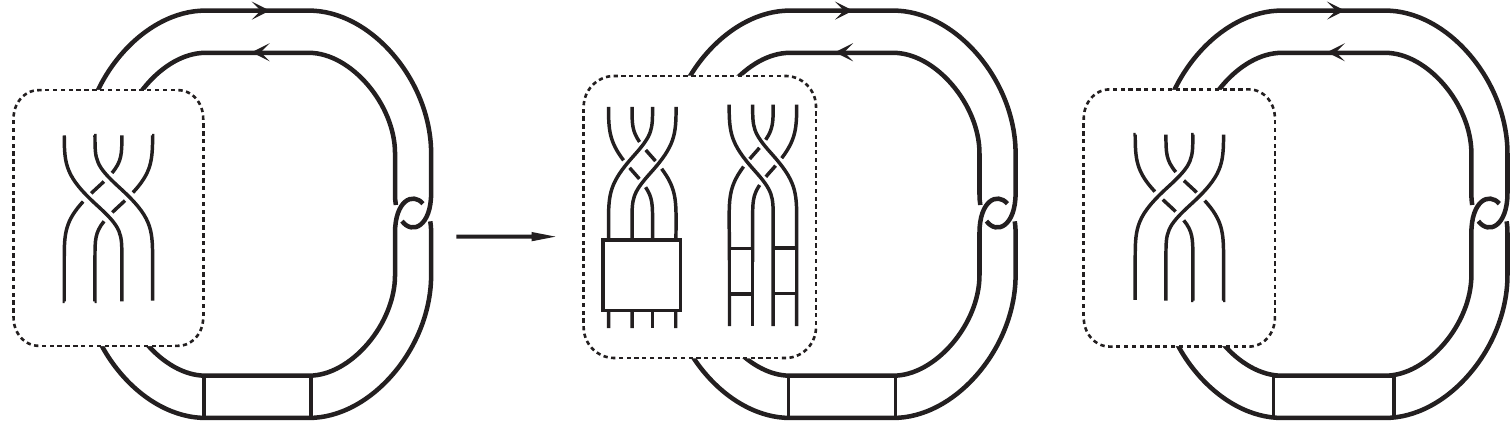}}
  \put(217.7,44){\makebox(0,0){\rotatebox{270}{\tiny $\mbox{}_{-1}$}}}
  \put(230.3,44){\makebox(0,0){\rotatebox{270}{\tiny $\mbox{}_{-1}$}}}
  \put(189.5,42.5){\makebox(0,0){\rotatebox{270}{\small ${-1}$}}}
  \put(207,56){\makebox(0,0){{$=$}}}
  \put(308,56){\makebox(0,0){{$=$}}}
  \put(77.8,7){\makebox(0,0){\scriptsize $-w+k$}}
  \put(246.8,7){\makebox(0,0){\scriptsize $-w+k$}}
  \put(389.5,7){\makebox(0,0){\tiny $-w'+k'$}}
 \end{picture}
 \caption{Blowing up $-1$ of $D_+(K_+,k)$. Here $w'=w-2$ and $k'=k-4$.}
 \label{TD_-blowup}
\end{figure}

Likewise, applying a blowing up $-1$ as in Figure~\ref{TD_-blowup}, we see $D_+(K_+,k)$ can be converted into $D_+(K_-,k-4)$ by a blowing up $-1$ on a zero linked unknot. Since a blowing up $-1$ is the reverse procedure of a blowing up $+1$, the lemma follows.
\end{proof}

\subsection*{Proof of Theorem~\ref{mainthm}}\label{MainThmProof}
Now we prove the crossing change formula for Livingston--Naik's invariant.
Since $\nu$ is BU nonnegative, by Lemma~\ref{lem:BU double} we have
\[ 
\nu(D_+(K_+,k)) \geq \nu(D_+(K_-,k)).
\]
For $k < t_{\nu}(K_-)$, $\nu(D_+(K_-,k)) = 1$ and hence $\nu(D_+(K_+,k)) = 1$ since $\nu(D_+(K_+,k))$ is $0$ or $1$ by~\cite[Theorem 2]{LN2006}. So $t_{\nu}(K_-) \leq t_{\nu}(K_+)$.

On the other hand, Lemma~\ref{lem:BU double} tells us that
\[
\nu(D_+(K_+,k)) \leq \nu(D_+(K_-,k-4)).
\]
For $k \geq t_{\nu}(K_-)+4$, $\nu(D_+(K_-,k-4)) = 0$ and hence $\nu(D_+(K_+,k))=0$, which implies $t_{\nu}(K_+) \leq t_{\nu}(K_-)+4$, as desired.

\section{Slicing numbers}\label{sec:slicing}

\subsection*{Proof of part (a) of Corollary~\ref{maincor}}\label{MainCor(a)Proof}
Suppose that a knot $K$ can be converted into a slice knot $J$ by changing $p$ positive crossings and $n$ negative crossings. Since $t_\nu(J)=0$, applying the crossing change formula repeatedly, it is easy to see that
\[
-4n \leq t_{\nu}(K) \leq 4p.
\]

\subsection*{Signed slicing numbers}
Let $K$ be a knot. Let $\cals$ be the set of ordered pairs of nonnegative integers $(p,n)$ for which a slice knot can be obtained by changing $p$ positive crossings and $n$ negative crossings of $K$. Let $\cals_+$ and $\cals_-$ be the projection of $\cals$ in the first and second coordinates, respectively. Define $u_s^\pm(K)$ to be the minimum of $\cals_\pm$, respectively. Then part (a) of Corollary~\ref{maincor} tells us that
\[
-4u_s^-(K) \leq t_{\nu}(K) \leq 4u_s^+(K).
\]
From this, we have the following observations. The \emph{slicing number} of a knot $K$, $u_s(K)$, is defined to be the minimum number of crossing changes which convert $K$ into a slice knot. Note that $u_s^\pm(K)\le u_s(K)$. Livingston~\cite{L2002} defined the invariant $U_s(K)$ by the minimum of the maximum of $(p,n)$, where $(p,n)$ runs through all elements of $\cals$. Note that $\min \{u_s^+(K), u_s^-(K)\}\le U_s(K)$. So, $t_\nu$ gives a bound for $U_s(K)$ and $u_s(K)$.

\section{Alternating knots with unknotting number one}\label{sec:alternating}
Ng~\cite{Ng2005} showed that the maximal Thurston--Bennequin number $TB(L)$ of a nonsplit alternating link $L$ is determined by its Jones Polynomial $V_L(t)$ and its classical signature $\sigma(L)$. Here, we use the convention $\sigma(T_{2,3})=-2$, which is different from that of Ng.

\begin{theorem}[Ng]
\label{Ng}
If $L$ is a nonsplit alternating link, then
\[
TB(L)=m(L) - \sigma(L)/2-1,
\]
where $m(L)$ is the minimum $t$--degree of $V_L(t)$.
\end{theorem}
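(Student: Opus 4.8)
The plan is to establish the two inequalities $TB(L)\le m(L)-\sigma(L)/2-1$ and $TB(L)\ge m(L)-\sigma(L)/2-1$ separately, working throughout with a fixed reduced alternating diagram $D$ of $L$ together with its two checkerboard colorings. Write $c_+$ and $c_-$ for the numbers of positive and negative crossings of $D$, and $s_A$ and $s_B$ for the numbers of loops in the all--$A$ and all--$B$ Kauffman states of $D$; for a connected diagram these satisfy $s_A+s_B=c_++c_-+2$. The strategy is to express $TB(L)$, an upper bound coming from a polynomial invariant, and the quantity $m(L)-\sigma(L)/2-1$ all as one and the same explicit function of this diagrammatic data.

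For the upper bound I would invoke the Kauffman polynomial estimate for the maximal Thurston--Bennequin number: in a standard normalization this reads $TB(L)\le -\max\deg_a F_L(a,z)-1$, where $F_L$ is the Kauffman polynomial and $\max\deg_a$ denotes its top power of $a$. The essential input is that a reduced alternating diagram is adequate, so by Thistlethwaite's description of the Kauffman polynomial of alternating links the extreme $a$--degree of $F_L$ does not cancel and is an explicit function of $c_\pm$, $s_A$, and $s_B$; substituting this into the estimate gives $TB(L)\le E(D)$ for an explicit diagrammatic quantity $E(D)$.

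For the lower bound I would construct an explicit Legendrian representative realizing $E(D)$. Starting from $D$, one arranges it as a Legendrian front so that the alternating pattern forces no superfluous cusps; the resulting Legendrian link $\Lambda$ has Thurston--Bennequin number equal to the writhe minus the number of right cusps of a planar diagram $D'$ isotopic to $D$, and the alternating structure simultaneously controls both quantities. One then checks that this value is again $E(D)$, so that $TB(L)\ge E(D)$ and hence $TB(L)=E(D)$. It remains to identify $E(D)$ with $m(L)-\sigma(L)/2-1$: the minimal $t$--degree $m(L)=\min\deg_t V_L$ is, via the Kauffman bracket state sum and adequacy of $D$, an explicit function of $c_\pm$ and $s_A$ (Kauffman--Murasugi--Thistlethwaite), whereas $\sigma(L)$ is, via the Gordon--Litherland formula and the definiteness of the Goeritz form of an alternating diagram, an explicit function of $c_\pm$ and the number of regions of one color; combining these two identities yields $m(L)-\sigma(L)/2-1=E(D)$.

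The main obstacle is precisely this reconciliation of combinatorial formulas: one must verify that the top $a$--degree of $F_L$, the quantity ``writhe minus number of right cusps'' for the constructed front, and $m(L)-\sigma(L)/2-1$ all reduce to the same function of $c_\pm$, $s_A$, and $s_B$. Each of the three rests on a different structural feature --- adequacy of alternating diagrams, explicit control of the Legendrian front, and definiteness of the Goeritz form --- and carrying this out while tracking the various sign and normalization conventions, in particular the convention $\sigma(T_{2,3})=-2$ adopted here (opposite to Ng's), is where the real work lies.
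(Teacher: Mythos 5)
This theorem is not proved in the paper: it is quoted verbatim from Ng's article \cite{Ng2005} (with only a sign convention on $\sigma$ adjusted), so there is no internal argument to compare your proposal against. Judged against Ng's actual proof, your overall architecture is the right one --- a general upper bound on $TB(L)$ coming from a link invariant, made explicit for alternating diagrams via adequacy, matched by a lower bound from an explicit Legendrian front built out of the alternating diagram. The one genuine divergence is the source of the upper bound: Ng uses his Khovanov homology bound $TB(L)\le\min\{i-j:\mathit{Kh}^{i,j}(L)\ne 0\}-1$ together with Lee's theorem that alternating links are Khovanov--thin on the two diagonals determined by $\sigma(L)$, which is what makes the bound collapse to $m(L)-\sigma(L)/2-1$; you propose instead the Kauffman polynomial bound plus Thistlethwaite's computation of its extreme $a$--degree for adequate diagrams. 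That route is viable in principle (the two bounds coincide on alternating links, a point Ng himself discusses), but it is not what the cited proof does.

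The genuine gap is that every load-bearing step of your plan is deferred rather than carried out. Most seriously, the lower bound --- the construction of a Legendrian representative realizing the bound --- is the heart of the theorem, and ``one arranges $D$ as a Legendrian front so that the alternating pattern forces no superfluous cusps'' is a hope, not an argument: a generic front of an alternating diagram does not achieve the maximal Thurston--Bennequin number, and Ng must put the diagram into a specific Legendrian normal form, resolve each crossing in a prescribed way, and then count right cusps against the writhe to verify that the result is exactly $m(L)-\sigma(L)/2-1$. Likewise the claimed identity $E(D)=m(L)-\sigma(L)/2-1$, which you correctly identify as ``where the real work lies,'' is precisely the content that must be verified; until the three quantities (top $a$--degree of $F_L$, writhe minus right cusps of the constructed front, and $m(L)-\sigma(L)/2-1$) are actually expressed in terms of $c_\pm$, $s_A$, $s_B$ and shown equal --- with the normalization $\sigma(T_{2,3})=-2$ tracked throughout --- no inequality in either direction has been established. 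As it stands the proposal is a credible outline of a known proof strategy, not a proof.
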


The quantities $m(L)$ and $\sigma(L)$ can be computed easily from a connected, reduced, alternating diagram $D$ of $L$. Suppose that $D$ has $n$ crossings and writhe number~$w$. Let $\langle D\rangle$ be the Kauffman bracket polynomial in an indeterminate $A$. We give a checkerboard coloring to the regions of $S^2$ divided by the diagram $D$. Since $D$ is connected and alternating, it is possible to color them so that the regions incident to each crossing look like Figure~\ref{fig:checkerboard}. Let $X$ and $Y$ be the numbers of unshaded and shaded connected regions, respectively.

\begin{figure}
 \fig{0.4}{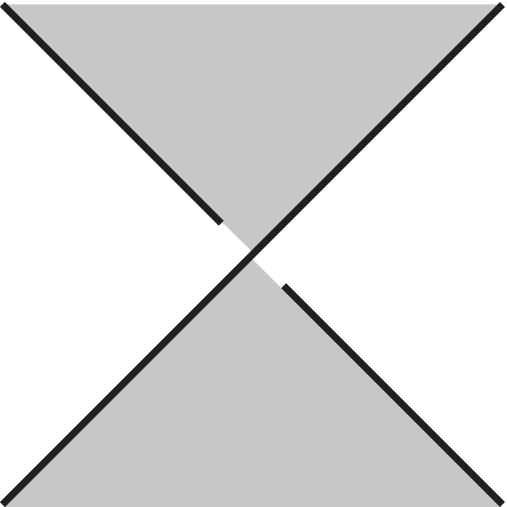}
 \caption{Checkerboard coloring}
 \label{fig:checkerboard}
\end{figure} 

Kauffman~\cite{K1987} showed that, if $D$ is connected and alternating, the bracket polynomial $\langle D \rangle$ has the maximum degree $n+2X-2$ and the minimum degree $-n-2Y+2$. Since the Jones polynomial $V_L(t)$ is equal to $(-A)^{-3w}\langle D\rangle$ when $A$ is replaced by $t^{-1/4}$, we see that the minimum $t$--degree $m(L)$ of $V_L(t)$ is equal to $m(L)=(3w-n-2X+2)/4$.

The signature $\sigma(L)$ of $L$ is $s_0-n_+ -1$, where $n_+$ is the number of positive crossings of $D$ and $s_0$ is the number of circles in the state obtained by smoothing all crossings of $D$ so that all the shaded regions become connected. This formula appears in several papers, including \cite[Proposition 3.11]{lee2005}. It is easy to see that $s_0=X$, as stated in~\cite[Proof of 2.9]{K1987}. So $\sigma(L)=X-n_+ -1$.

Combining the above two identities and noting $w=2n_+ -n$, we get the following simple formula for $TB(L)$:
\begin{corollary} 
\label{eqtnTB}
If a link $L$ has a connected, reduced, alternating diagram $D$, then
\[
TB(L)= w-X,
\]
where $w$ is the writhe of $D$ and $X$ is the number of unshaded regions in the checkerboard coloring as in Figure~\ref{fig:checkerboard}.
\end{corollary}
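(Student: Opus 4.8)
The plan is to combine the three explicit formulas already derived in this section with Ng's theorem (Theorem~\ref{Ng}) and simplify; no new geometric input is needed. The ingredients at hand are: the identity $TB(L) = m(L) - \sigma(L)/2 - 1$ from Theorem~\ref{Ng}; the identity $m(L) = (3w - n - 2X + 2)/4$ coming from Kauffman's degree bounds on $\langle D\rangle$ together with the normalization $V_L(t) = (-A)^{-3w}\langle D\rangle$; the identity $\sigma(L) = X - n_+ - 1$ coming from the state-sum formula for the signature together with $s_0 = X$; and the elementary relation $w = 2n_+ - n$ among the writhe, the number $n_+$ of positive crossings, and the crossing number $n$.

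First I would substitute the expressions for $m(L)$ and $\sigma(L)$ into Ng's formula and clear the denominator $4$:
\[
4\,TB(L) = (3w - n - 2X + 2) - 2(X - n_+ - 1) - 4 = 3w - n - 4X + 2n_+ .
\]
Next I would eliminate $n_+$ using $2n_+ = w + n$, obtaining
\[
4\,TB(L) = 3w - n - 4X + (w + n) = 4w - 4X ,
\]
and dividing by $4$ gives $TB(L) = w - X$, which is the assertion.

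Since every ingredient has already been established in the paragraphs preceding the statement, the argument reduces to this short computation and there is no genuine obstacle. The only points deserving minor care are the bookkeeping of signs and of the factor $\tfrac12$ in $\sigma(L)/2$, and the fact that the state-sum formula $\sigma(L) = s_0 - n_+ - 1$ used above is the one compatible with the sign convention $\sigma(T_{2,3}) = -2$ fixed at the start of the section; with the opposite convention one would instead land on $TB(L) = w - Y$ for the shaded-region count, so it is worth recording explicitly which checkerboard region is being counted.
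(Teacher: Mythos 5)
Your computation is exactly the paper's proof: the authors likewise substitute $m(L)=(3w-n-2X+2)/4$ and $\sigma(L)=X-n_+-1$ into Ng's formula and use $w=2n_+-n$ to obtain $TB(L)=w-X$. The arithmetic checks out, so the proposal is correct and essentially identical to the paper's argument.
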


\subsection*{Proof of part (b) of Corollary~\ref{maincor}}\label{MainCor(b)Proof}
Let $K_{2n+1}$ be the knot which has the diagram in Figure~\ref{K(2n+1)}. Notice that $K_{2n+1}$ is isotopic to the $(-n)$--twisted double of the unknot $U$, $D_+(U,-n)$. It is alternating and has unknotting number one. One positive crossing change located at the top in the diagram is sufficient for $K_{2n+1}$ to be unknotted. By Corollary~\ref{maincor}(a), we have $t_\nu(K_{2n+1})\le 4$.
 
\begin{figure}
 \setlength{\unitlength}{1pt}
 \begin{picture}(119,115)
  \put(-2,0){\fig{1}{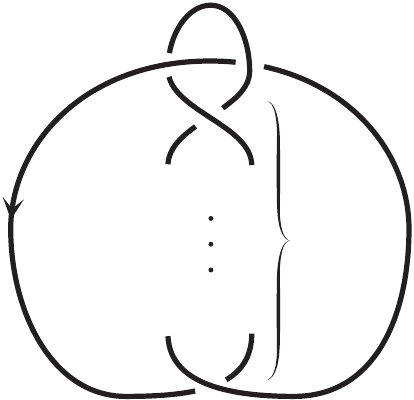}}
  \put(81,46.5){\makebox(0,0)[l]{\scriptsize $\begin{array}{c}2n-1 \\ \text{crossings}\end{array}$}}
 \end{picture}
 \caption{Knot $K_{2n+1}$ that is isotopic to $D_+(U,-n)$}
 \label{K(2n+1)}
\end{figure}

On the other hand, it is easy to see $w(-K_{2n+1})= -w(K_{2n+1})=-2n-1$ and $X(-K_{2n+1})=Y(K_{2n+1})=3$. So, by Corollary~\ref{eqtnTB}, we have $-TB(-K_{2n+1})=2n+4$.

Levine~\cite{Le1969} proved that $D_+(U,-n)$, $n>0$, represent linearly independent classes in the algebraic concordance group, and hence in the knot concordance group $\mathcal{C}$.

\subsection*{Acknowledgements} The authors would like to thank the referee for pointing out errors and for helpful suggestions regarding the exposition, which significantly improve the readability.

\bibliographystyle{plain}


\end{document}